\newtheorem{theorem}{Theorem}[section]
\theoremstyle{definition}
\theoremstyle{remark}
\numberwithin{equation}{section}
\newcommand{\grad}{\mathop{\rm grad}\nolimits}
\renewcommand{\div}{\mathop{\rm div}\nolimits}
\begin{document}

\title[Additive schemes for some systems of evolutionary equations]
{Additive schemes (splitting schemes) for some systems of evolutionary equations}

%    Information for first author
\author{Petr N. Vabishchevich}
%    Address of record for the research reported here
\address{Nuclear Safety Institute,
Russian Academy of Sciences,
52, B. Tulskaya, 115191 Moscow, Russia}
\email{vab@ibrae.ac.ru}

%    General info
\subjclass[2010]{Primary 65N06, 65M06}

\date{}

\keywords{Evolutionary problems, splitting schemes, stability of operator-difference schemes, 
additive operator-difference schemes}

\begin{abstract}
On the basis of additive schemes (splitting schemes) we construct efficient numerical 
algorithms to solve approximately the initial-boundary value problems 
for systems of time-dependent partial differential equations (PDEs).  
In many applied problems the individual components of the vector of unknowns are
coupled together and then splitting schemes are applied in order to get a simple problem 
for evaluating components at a new time level. 
Typically, the additive operator-difference schemes for systems 
of evolutionary equations are constructed for operators coupled in space. 
In this paper we investigate more general problems where coupling of 
derivatives in time for components of the solution vector takes place. 
Splitting schemes are developed using an additive representation for both
the primary operator of the problem and the operator at the time derivative.  
Splitting schemes are based on a triangular two-component representation of the operators. 
\end{abstract}

\maketitle

\section*{Introduction}

Mathematical modeling of applied problems is based on approximate solving 
boundary value problems for systems of time-dependent PDEs. 
To construct numerical algorithms for solving such problems, we develop 
approximations to the equations taking into account the corresponding 
initial and boundary conditions. The approximation in space is conducted using finite difference schemes, 
finite element procedures or finite volume methods  \cite{Samarskii,Angermann,Saleri}.
Special attention should be given to approximations in time 
for numerical solving problems with systems of equations  \cite{Gustafsson,Ascher,LeVeque}.
In addition to the general conditions of approximation and stability we should keep 
in mind the questions of computational implementation of the constructed schemes. 
To solve the corresponding grid problem at the new time level in an efficient way, special additive 
operator-difference schemes (splitting schemes) are in common use  \cite{Yanenko,Marchuk,Samarskii}. 

In approximate solving initial-boundary value problems for multidimensional PDEs, 
a transition to a chain of simpler problems allows us 
to construct economical difference schemes --- we have splitting with respect to spatial variables.  
In some cases it is useful to split into the subproblems of different nature --- 
we have splitting into physical processes  \cite{Marchuk,SamVabAdditive}. 
Recently, active discussions are concerned with regionally additive schemes 
(domain decomposition schemes), which are oriented to parallel computers \cite{Mathew,SamMatVab}. 

The additive schemes for vector problems can be treated as a separate class. 
The schemes of this type can be used to develop efficient numerical algorithms 
for solving time-dependent systems of PDEs. 
A typical situation is the case where the individual components of the unknown  vector are
interconnected and it is difficult to derive a simple problem for evaluating vector components at the new time level. 

Various classes of additive schemes are developed for vector problems (see, e.g. \cite{SamVabVect}).
For parabolic and hyperbolic systems of equations with a self-adjoint elliptic operator 
the locally one-dimensional additive schemes are constructed in  \cite{Samarskii}
using the principle of regularization for difference schemes. 
To construct efficient splitting schemes for the systems of equations, 
the Samarskii alternating triangular method can be employed, which is generally treated as 
an iterative method \cite{Samarskii,SamNikBook}.
This approach is implemented, in particular, in \cite{VabLis}
for the dynamic problems of elasticity, and in \cite{SamVabNS} --- 
for the problems of an incompressible fluid with a variable viscosity. 
Additive schemes for time-dependent vector equations of first and second order
are constructed in  \cite{VabVecPap} for problems of electrodynamics.

Traditionally, additive operator-difference schemes for systems of evolutionary equations 
are in common use to construct approximations to equations with operators interconnected in space. 
But in some cases there does exist a coupling between the time derivatives of the individual components of 
the solution vector. Therefore, it is necessary to design  additive 
operator-difference schemes with splitting an operator at the time derivative. 
Theory and practice of constructing the splitting schemes of such kind is right at the beginning of the development. 
In fact, the first work on the splitting schemes for the problems with an additive 
representation of an operator at the time derivative for the evolutionary
equations of first order is the paper \cite{VabBadd}. 
New vector additive schemes are proposed and investigated in it using splitting the operator
at the time derivative into a sum of positive definite self-adjoint operators. 
Unfortunately, these schemes cannot be directly applied to the systems of evolutionary 
equations with a coupling between the time derivatives. 

In this paper we construct splitting schemes using an additive representation 
for both the primary operator of the problem and the operator at the time derivative.
The schemes are based on a triangular two-component  representation of the operators and 
are applied to systems of evolutionary equations. The work is organized as follows. 
In Section 1 we formulate the Cauchy problem for a system of evolutionary equations of first order. 
We also provide an example of the equation system with PDEs, 
which can be related to the considered Cauchy problem with the corresponding approximations in space. 
The standard two-level operator-difference scheme is discussed in Section 2. 
Section 3 deals with the construction of additive schemes via the triangular 
splitting of the primary operator of the problem. 
The general problem with splitting both the primary operator of the problem and the operator at the time derivative
is presented in Section 4.  

\section{Problem formulation}

Let $H_{\alpha}, \ \alpha =1,2,\dots,p$ be finite-dimensional real Hilbert (Euclidean) 
spaces where the scalar product and  the norm are $(\cdot,\cdot)_{\alpha}$ and 
$\|\cdot\|_{\alpha}, \ \alpha =1,2,\dots,p$, respectively.
The individual components of the solution are denoted by $u_{\alpha}(t), \ \alpha =1,2,\dots,p$ 
for any $t$ ($0 \leq t \leq T, \ T > 0$). 
We search the solution for the system of evolutionary equations of first order: 
\begin{equation}\label{1}
  \sum_{\beta =1}^{p} B_{\alpha \beta} \frac{d u_{\beta}}{d t} +
  \sum_{\beta =1}^{p} A_{\alpha \beta}  u_{\beta} = f_{\alpha},
  \quad \alpha = 1, 2, \dots, p. 
\end{equation}
Here $f_{\alpha}(t) \in L_2(0,T; H_{\alpha}), \ \alpha =1,2,\dots,p$ are specified, and 
$B_{\alpha \beta}, \ A_{\alpha \beta}$ are the linear constant (independent of $t$) operators
acting from $H_{\beta}$ into $H_{\alpha}$ 
($A_{\alpha \beta}: H_{\beta}\to H_{\alpha}$, 
$B_{\alpha \beta}: H_{\beta}\to H_{\alpha}$)
for all $\alpha =1,2,\dots,p$.
The system of equations (\ref{1}) is supplemented with the initial conditions 
\begin{equation}\label{2}
  u_{\alpha}(0) = v_{\alpha}^0, 
  \quad \alpha =1,2,\dots,p .
\end{equation}

We treat the system of equations (\ref{1}) as a single evolutionary equation for the vector 
$\mathbf{u} = \{u_1, u_2, \dots, u_p \}$:
\begin{equation}\label{3}
  \mathbf{B} \frac{d \mathbf{u}}{d t} + \mathbf{A} \mathbf{u} = \mathbf{f}(t),
  \quad 0 < t \leq T,
\end{equation}
where $\mathbf{f} = \{f_1, f_2, \dots, f_p \}$, and for the elements of the 
operator matrices $\mathbf{A}$ and  $\mathbf{B}$ we have the representation
\[
  \mathbf{A} = \{A_{\alpha \beta} \},
  \quad \mathbf{B} = \{B_{\alpha \beta} \} ,
  \quad  \alpha, \beta =1,2,\dots,p .
\]
On the direct sum of spaces \cite{Halmos} 
$\mathbf{H} = H_1 \oplus  H_2 \oplus \cdots \oplus H_p$ we set 
\[
  (\mathbf{u}, \mathbf{v}) = \sum_{\alpha =1}^{p} (u_{\alpha},v_{\alpha})_{\alpha} ,
  \quad \|\mathbf{u} \|^2 = \sum_{\alpha =1}^{p} \|u_{\alpha}\|^2_{\alpha} .
\]
Taking into account (\ref{2}), we define 
\begin{equation}\label{4}
  \mathbf{u}(0) = \mathbf{v}^0,
\end{equation}  
where  $\mathbf{v}^0 = \{v^0_1, v^0_2, \dots, v^0_p \}$.

We consider the Cauchy problem (\ref{3}), (\ref{4}), under the condition
that the operators $\mathbf{A}$  and $\mathbf{B}$ are self-adjoint 
and positive definite in  $\mathbf{H}$:
\begin{equation}\label{5}
  \mathbf{A} = \mathbf{A}^* \geq \delta_A  \mathbf{E},
  \quad \delta_A > 0,
  \quad  \mathbf{B} = \mathbf{B}^* \geq \delta_B  \mathbf{E},
  \quad \delta_B > 0,
\end{equation} 
where $\mathbf{E}$ is the identity operator in $\mathbf{H}$.
The self-adjointness is associated with the fulfillment of
\[
 A_{\alpha \beta} = A^*_{\beta \alpha},
 \quad B_{\alpha \beta} = B^*_{\beta \alpha},
 \quad  \alpha, \beta =1,2,\dots,p 
\] 
for operators of the original system of equations (\ref{1}). 

Here is the simplest a priori estimate for the solution of the Cauchy problem (\ref{3}), (\ref{4}).
We will use it as a guide in investigating the corresponding operator-difference schemes.  
For $\mathbf{D} = \mathbf{D}^*  > 0$ we use the notation $\mathbf{H}_{\mathbf{D}}$ 
for a space $\mathbf{H}$ with the scalar product 
$(\mathbf{y},\mathbf{w})_{\mathbf{D}} = (\mathbf{D}\mathbf{y},\mathbf{w})$ 
and norm $\|\mathbf{y}\|_{\mathbf{D}}= (\mathbf{D}\mathbf{y},\mathbf{y})^{1/2}$.

Scalarly multiplying both sides of equation (\ref{3}) in $\mathbf{H}$ by ${\displaystyle \frac{d \mathbf{u}}{d t}}$,
we obtain 
\[
  \left (\mathbf{B} \frac{d \mathbf{u}}{d t}, \frac{d \mathbf{u}}{d t} \right ) +
  \frac{1}{2} \frac{d }{d t} (\mathbf{A} \mathbf{u}, \mathbf{u}) =
  \left (\mathbf{f},  \frac{d \mathbf{u}}{d t} \right ) .  
\] 
Taking into account (\ref{5}) and 
\[
  \left (\mathbf{f},  \frac{d \mathbf{u}}{d t} \right ) \leq
  \left (\mathbf{B} \frac{d \mathbf{u}}{d t}, \frac{d \mathbf{u}}{d t} \right ) +
  \frac{1}{4} \left (\mathbf{B}^{-1} \mathbf{f}, \mathbf{f} \right ) ,
\] 
we derive the inequality 
\[
  \frac{d }{d t} \| \mathbf{u} \|^2_{\mathbf{A}} \leq 
  \frac{1}{2} \| \mathbf{f} \|^2_{\mathbf{B}^{-1}} .
\] 
We get from it the following a priori estimate 
\begin{equation}\label{6}
  \| \mathbf{u} (t) \|^2_{\mathbf{A}} \leq  \| \mathbf{v}^0 \|^2_{\mathbf{A}}
  + \frac{1}{2} \int_{0}^{t} \| \mathbf{f}(\theta) \|^2_{\mathbf{B}^{-1}} d \theta ,
\end{equation} 
which expresses the stability of the solution of problem (\ref{3}), (\ref{4}) 
with respect to the initial data and right-hand side. 

Systems of evolutionary equations, similar to (\ref{1}),
result from the approximation in space to many applied problems. 
Let us consider here some typical examples do not specifying
the corresponding spaces of continuous and grid functions. 

First, we mention the system of coupled parabolic equations of second order
describing mass transfer in multicomponent media 
\cite{Taylor,Giovangigli}. 
The solution is sought in a bounded domain
$\Omega$,
$u_{\alpha}(\mathbf{x},t)$, $\mathbf{x} \in \Omega$:
\[
  \sum_{\beta =1}^{p} b_{\alpha \beta}(\mathbf{x}) \frac{\partial u_{\beta}}{\partial t} -
  \sum_{\beta =1}^{p} \div (k_{\alpha \beta} (\mathbf{x}) \grad u_{\beta}) +
  \sum_{\beta =1}^{p} r_{\alpha \beta} (\mathbf{x}) u_{\beta} = f_{\alpha}(\mathbf{x},t),
\]
\[
  \mathbf{x} \in \Omega,
  \quad 0 < t \leq T,  
  \quad \alpha = 1, 2, \dots, p. 
\]  
The coefficients $r_{\alpha \beta}$ are associated with the reaction processes whereas 
$k_{\alpha \beta}$ describe the diffusion phenomena
(\emph{main-term} diffusion at $\alpha = \beta$ and \emph{cross-term} diffusion coefficients at $\alpha \neq \beta$). 
For the multicomponent media we have $b_{\alpha \beta} = \delta_{\alpha \beta}b_{\alpha}$, 
where  $\delta_{\alpha \beta}$ is the Kronecker symbol. 

The second example concerns a fluid motion in the porous media. 
The governing equations for a flow in fractured porous media employ the multiple 
porosity model (see, for example,  \cite{Barenblatt,Cheng}).
In this case $u_{\alpha}(\mathbf{x},t)$ is the dynamic pore pressure in an p-porosity model.
For these models it is principal that $b_{\alpha \beta} \neq  \delta_{\alpha \beta}b_{\alpha}$
and $r_{\alpha \beta} \neq 0$ (the Barenblatt model).

\section{Scheme with weights}

To solve approximately operator-differential problem (\ref{3}), (\ref{4}),
we use the standard scheme with weights \cite{Samarskii}.
Introduce a uniform grid in time 
\[
  \overline{\omega}_\tau =
  \omega_\tau\cup \{T\} =
  \{t_n=n\tau,
  \quad n=0,1,...,N,
  \quad \tau N=T\} 
\]
and suppose  $\mathbf{y}^n = \mathbf{y}(t_n), \ t_n = n \tau$.
Let us approximate equation (\ref{3}) via the following two-level difference scheme
\begin{equation}\label{7}
  \mathbf{B} \frac{\mathbf{y}^{n+1} - \mathbf{y}^{n}}{\tau } +
  \mathbf{A} (\sigma \mathbf{y}^{n+1} + (1-\sigma) \mathbf{y}^{n} )  =
  \boldsymbol{\varphi}^{n} ,
\end{equation}
where $\sigma$ is a numerical parameter (weight) within $0 \le \sigma \le 1$
and, for example, $\boldsymbol{\varphi}^{n} = \mathbf{f} (\sigma \mathbf{t}^{n+1} + (1-\sigma) \mathbf{t}^{n} )$.
For simplicity, we restrict ourselves to the case of the equal weight 
for all equations of systems (\ref{1}).
In view of (\ref{4}) we supplement (\ref{7}) with the initial condition 
\begin{equation}\label{8}
  \mathbf{y}^{0} = \mathbf{v}^{0}.
\end{equation}
A detailed study of the schemes  with weights (the necessary and sufficient conditions 
for stability, the choice of a norm) was conducted in \cite{SamGul,SamMatVab}.
Here we restrict ourselves to the simplest estimate of stability for
operator-difference scheme (\ref{7}), (\ref{8}).
Estimate (\ref{6}) will serve us as a guide.

\begin{theorem}\label{t-1}
If $\sigma \geq 1/2$, then operator-difference scheme  
(\ref{7}) is absolutely stable in $\mathbf{H}_{\mathbf{A}}$ and for 
the difference solution the following level-wise estimate is valid:
\begin{equation}\label{9}
  \|\mathbf{y}^{n+1}\|^2_\mathbf{A} \leq  \|\mathbf{y}^{n}\|^2_\mathbf{A} +
  \frac{\tau }{2} \|\boldsymbol{\varphi}^n\|^2_{\left (\mathbf{B} + \left (\sigma - \frac{1}{2} \right ) \tau  \mathbf{A} \right )^{-1}} .
\end{equation}
\end{theorem}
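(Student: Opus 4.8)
The plan is to carry over to the discrete level the energy argument that produced the a priori estimate \eqref{6}, by testing scheme \eqref{7} against the discrete time derivative $w := \tau^{-1}(\mathbf{y}^{n+1}-\mathbf{y}^{n})$, which plays the role of $d\mathbf{u}/dt$.

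First I would rewrite the weighted term symmetrically. Using
\[
  \sigma \mathbf{y}^{n+1} + (1-\sigma)\mathbf{y}^{n}
  = \frac{\mathbf{y}^{n+1}+\mathbf{y}^{n}}{2}
  + \left(\sigma - \frac{1}{2}\right)\tau\, w ,
\]
equation \eqref{7} takes the form $\mathbf{G}\,w + \mathbf{A}\,\dfrac{\mathbf{y}^{n+1}+\mathbf{y}^{n}}{2} = \boldsymbol{\varphi}^{n}$, where
\[
  \mathbf{G} := \mathbf{B} + \left(\sigma - \frac{1}{2}\right)\tau\,\mathbf{A} .
\]
Since $\sigma \ge 1/2$ and, by \eqref{5}, $\mathbf{A}=\mathbf{A}^*>0$, $\mathbf{B}=\mathbf{B}^*>0$, the operator $\mathbf{G}$ is self-adjoint and positive definite; in particular $\mathbf{G}^{-1}$ and $\mathbf{G}^{1/2}$ are well defined, so the norm on the right-hand side of \eqref{9} makes sense. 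This is the only place where the hypothesis $\sigma \ge 1/2$ enters.

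Next I would take the inner product of the rewritten equation with $w$ in $\mathbf{H}$. The term containing $\mathbf{A}$ telescopes: using $\mathbf{A}=\mathbf{A}^*$,
\[
  \left( \mathbf{A}\,\frac{\mathbf{y}^{n+1}+\mathbf{y}^{n}}{2},\, w \right)
  = \frac{1}{2\tau}\left( \|\mathbf{y}^{n+1}\|^2_\mathbf{A} - \|\mathbf{y}^{n}\|^2_\mathbf{A} \right),
\]
the discrete analogue of $\tfrac12\frac{d}{dt}\|\mathbf{u}\|^2_\mathbf{A}$. For the right-hand side, Cauchy--Schwarz together with Young's inequality in the $\mathbf{G}$-inner product gives
\[
  (\boldsymbol{\varphi}^{n}, w)
  = \left( \mathbf{G}^{-1/2}\boldsymbol{\varphi}^{n},\, \mathbf{G}^{1/2} w \right)
  \le (\mathbf{G}\,w, w) + \frac{1}{4}\,\|\boldsymbol{\varphi}^{n}\|^2_{\mathbf{G}^{-1}},
\]
which is exactly the discrete counterpart of the bound used for $(\mathbf{f}, d\mathbf{u}/dt)$ in the derivation of \eqref{6}. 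Combining the last three displays, the nonnegative term $(\mathbf{G}\,w, w)$ cancels and one is left with
\[
  \frac{1}{2\tau}\left( \|\mathbf{y}^{n+1}\|^2_\mathbf{A} - \|\mathbf{y}^{n}\|^2_\mathbf{A} \right)
  \le \frac{1}{4}\,\|\boldsymbol{\varphi}^{n}\|^2_{\mathbf{G}^{-1}} ;
\]
multiplying by $2\tau$ yields \eqref{9}. Since no restriction linking $\tau$ to the operators was imposed, the scheme is absolutely stable in $\mathbf{H}_\mathbf{A}$.

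There is no genuinely hard step here: once the symmetric splitting of the weighted term is written down, the rest is the standard Samarskii energy identity. The only point requiring attention is the verification that $\mathbf{G}=\mathbf{G}^*>0$ --- that is, that adding the nonnegative self-adjoint operator $(\sigma-\tfrac12)\tau\mathbf{A}$ to the positive definite $\mathbf{B}$ preserves positivity --- which is precisely why $\sigma \ge 1/2$ is assumed and what makes the weighted $\mathbf{G}^{-1}$-norm on the right of \eqref{9} meaningful; for $\sigma < 1/2$ this argument breaks down and only conditional stability can be expected.
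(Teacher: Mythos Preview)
Your argument is correct and follows exactly the paper's proof: rewrite \eqref{7} with the symmetric average so that the operator $\mathbf{G}=\mathbf{B}+(\sigma-\tfrac12)\tau\mathbf{A}$ appears, test against the discrete time derivative, and absorb the right-hand side via the Cauchy--Schwarz/Young inequality in the $\mathbf{G}$-norm. The only cosmetic difference is that the paper multiplies by $2(\mathbf{y}^{n+1}-\mathbf{y}^{n})=2\tau w$ rather than by $w$, which amounts to the same identity up to a factor of $2\tau$.
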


\begin{proof}
We write scheme (\ref{7}) in the form 
\[
  \left (\mathbf{B} + \left (\sigma - \frac{1}{2} \right ) \tau \mathbf{A} \right )
  \frac{\mathbf{y}^{n+1} - \mathbf{y}^{n}}{\tau } +
  \mathbf{A} \frac{\mathbf{y}^{n+1} + \mathbf{y}^{n}}{2}  =
  \boldsymbol{\varphi}^{n} .
\] 
Scalarly  multiplying both sides of this equation in $\mathbf{H}$  by $2(\mathbf{y}^{n+1} - \mathbf{y}^{n})$,
we obtain the equality 
\[
  2 \tau  \left ( \left (\mathbf{B} + \left (\sigma - \frac{1}{2} \right ) \tau \mathbf{A} \right )
  \frac{\mathbf{y}^{n+1} - \mathbf{y}^{n}}{\tau },\frac{\mathbf{y}^{n+1} - \mathbf{y}^{n}}{\tau } \right ) +
\] 
\[
  (\mathbf{A} \mathbf{y}^{n+1}, \mathbf{y}^{n+1}) - (\mathbf{A} \mathbf{y}^{n}, \mathbf{y}^{n}) =
  2 \tau \left (\boldsymbol{\varphi}^{n}, \frac{\mathbf{y}^{n+1} - \mathbf{y}^{n}}{\tau } \right ) .
\] 
Using the inequality 
\[
  \left (\boldsymbol{\varphi}^{n}, \frac{\mathbf{y}^{n+1} - \mathbf{y}^{n}}{\tau } \right ) \leq  
  \left ( \left (\mathbf{B} + \left (\sigma - \frac{1}{2} \right ) \tau \mathbf{A} \right )
  \frac{\mathbf{y}^{n+1} - \mathbf{y}^{n}}{\tau },\frac{\mathbf{y}^{n+1} - \mathbf{y}^{n}}{\tau } \right ) +
\]
\[
  \frac{1}{4} \left (\left (\mathbf{B} + \left (\sigma - \frac{1}{2} \right ) \tau \mathbf{A} \right )^{-1}
  \boldsymbol{\varphi}^{n},\boldsymbol{\varphi}^{n} \right ) ,
\] 
we get the required estimate (\ref{9}).
\end{proof}

Estimate (\ref{9}) is just a grid analog of estimate (\ref{6})
and provides the unconditional stability of the difference scheme 
with weights (\ref{7}), (\ref{8}) under the natural conditions $\sigma \geq 1/2$. 
Considering the corresponding problem for the error, we obtain  convergence of the solution
of operator-difference problem (\ref{7}), (\ref{8}) to the solution of operator-differential 
problem (\ref{3}), (\ref{4})
in $\mathbf{H}_{\mathbf{A}}$ at $\sigma \geq 1/2$ ñ $\mathcal{O}((2 \sigma -1)\tau + \tau^2)$. 
If $\sigma = 1/2$, we have the second-order convergence with respect to $\tau$. 

Operator-difference scheme (\ref{7}) can be written in the canonical form of the
two-level  schemes  \cite{Samarskii} 
\begin{equation}\label{10}
  (\mathbf{B} + \sigma \tau \mathbf{A}) \frac{\mathbf{y}^{n+1} - \mathbf{y}^{n}}{\tau } +
  \mathbf{A} \mathbf{y}^{n} =
  \boldsymbol{\varphi}^{n}.
\end{equation} 
The transition to a new time level requires to solve the problem 
\[
 (\mathbf{B} + \sigma \tau \mathbf{A}) \mathbf{y}^{n+1} = \boldsymbol{\psi}^{n}.
\] 
For the original problem (\ref{1}), (\ref{2}) 
we have to solve the system of coupled equations 
\[
  \sum_{\beta =1}^{p} (B_{\alpha \beta} +
  \sigma \tau A_{\alpha \beta})y_\beta^{n+1} = \psi_\alpha^n,
  \quad \alpha = 1, 2, \dots, p. 
\]  
Various iterative methods can be used for it \cite{SamNikBook,Saad}.

Another opportunity here is to take into account the specific features
of the considered time-dependent problems and to construct splitting schemes, where the transition to a new 
time level is based on the solution of more simpler problems. 
For problems of type (\ref{1}), (\ref{2}) it is natural to employ the splitting schemes,
where the transition to the new time level is performed via solving the problems 
\[
  (B_{\alpha \alpha} +
  \sigma \tau A_{\alpha \alpha })y_\alpha ^{n+1} = \widetilde{\psi}_\alpha^n,
  \quad \alpha = 1, 2, \dots, p.  
\]  
This means that we have to invert only the diagonal part of the operator matrix 
$\mathbf{B} + \sigma \tau \mathbf{A}$ in our computations.

\section{Schemes with a diagonal operator  $\mathbf{B}$}

We start from the case where the problem  of inversion of the operator $\mathbf{B}$ does not exist. 
Such a situation occurs when the operator matrix  $\mathbf{B}$
at the time derivatives is diagonal:
\begin{equation}\label{11}
  B_{\alpha \beta} = \delta_{\alpha \beta}B_{\alpha},
  \quad \alpha = 1,2,\cdots,p .   
\end{equation} 
This class of problems appears in simulation of  mass transfer in multicomponent media. 
In this case the components of the solution vector are coupled due to 
the elements $A_{\alpha \beta}, \ \alpha \neq \beta$ of the operator matrix  $\mathbf{A}$.

Following to \cite{Samarskii,VabLis,SamVabNS,VabVecPap}, let us construct the additive operator-difference schemes 
using the triangular splitting of the operator  $\mathbf{A}$:
\begin{equation}\label{12}
  \mathbf{A} = \mathbf{A}_1 + \mathbf{A}_2,
  \quad  \mathbf{A}_1^* = \mathbf{A}_2 .
\end{equation} 
In the additive representation (\ref{12}) we have
\[
  \mathbf{A}_1 = 
  \begin{pmatrix}
  \frac{1}{2} A_{11} & 0 & \cdots & 0 \\
  A_{21} & \frac{1}{2} A_{22} & \cdots & 0 \\
  \cdots & \cdots & \cdots & 0 \\
  A_{p1} & A_{p2} & \cdots & \frac{1}{2} A_{pp} \\
  \end{pmatrix} ,
  \quad
  \mathbf{A}_2 = 
  \begin{pmatrix}
  \frac{1}{2} A_{11} & A_{12} & \cdots & A_{1p} \\
  0 & \frac{1}{2} A_{22} & \cdots & A_{2p} \\
  \cdots & \cdots & \cdots & A_{p-1p} \\
  0 & 0 & \cdots & \frac{1}{2} A_{pp} \\
  \end{pmatrix}  .
\] 
Instead of (\ref{10}) we use the scheme 
\begin{equation}\label{13}
  \widetilde{\mathbf{B}} \frac{\mathbf{y}^{n+1} - \mathbf{y}^{n}}{\tau } +
  \mathbf{A} \mathbf{y}^{n} =
  \boldsymbol{\varphi}^{n},
\end{equation} 
where the operator $\widetilde{\mathbf{B}}$ has the following factorized form 
\begin{equation}\label{14}
  \widetilde{\mathbf{B}} = 
  (\mathbf{B} + \sigma \tau \mathbf{A}_1) \mathbf{B}^{-1} (\mathbf{B} + \sigma \tau \mathbf{A}_2) .
\end{equation} 
Scheme (\ref{13}), (\ref{14}) is an operator-matrix analog of 
the Samarskii alternating triangle method \cite{Samarskii}.
 
Taking into account (\ref{12}), due to self-adjointness and positive definiteness of 
$\mathbf{B}$ at $\sigma \geq  1/2$ we have 
\[
  \widetilde{\mathbf{B}} = \mathbf{B} + \sigma\tau \mathbf{A} + \sigma^2\tau^2 \mathbf{A}_1 \mathbf{B}^{-1}\mathbf{A}_2,
  \quad   \widetilde{\mathbf{B}} = \widetilde{\mathbf{B}}^* \geq  \mathbf{B} + \sigma\tau \mathbf{A} .
\] 
Similar to Theorem \ref{t-1}, we prove the following statement. 

\begin{theorem}\label{t-2}
If $\sigma \geq 1/2$, then factorized operator-difference scheme  
(\ref{12})--(\ref{14})  is absolutely stable in $\mathbf{H}_{\mathbf{A}}$ and for 
the difference solution the following level-wise estimate is valid:
\[
  \|\mathbf{y}^{n+1}\|^2_\mathbf{A} \leq  \|\mathbf{y}^{n}\|^2_\mathbf{A} +
  \frac{\tau }{2} \|\boldsymbol{\varphi}^n\|^2_{\left (\mathbf{B} + \left (\sigma - \frac{1}{2} \right ) \tau \mathbf{A} +
  \sigma^2\tau^2 \mathbf{A}_1 \mathbf{B}^{-1}\mathbf{A}_2 \right )^{-1}} .
\]
\end{theorem}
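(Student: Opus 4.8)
The plan is to mimic the proof of Theorem~\ref{t-1} exactly, replacing the operator $\mathbf{B}$ there by the factorized operator $\widetilde{\mathbf{B}}$ here. First I would rewrite scheme \eqref{13}, using the identity $\widetilde{\mathbf{B}} = \mathbf{B} + \sigma\tau\mathbf{A} + \sigma^2\tau^2\mathbf{A}_1\mathbf{B}^{-1}\mathbf{A}_2$ together with the splitting $\mathbf{A}\mathbf{y}^n = \mathbf{A}\frac{\mathbf{y}^{n+1}+\mathbf{y}^n}{2} - \frac{\tau}{2}\mathbf{A}\frac{\mathbf{y}^{n+1}-\mathbf{y}^n}{\tau}$, in the symmetric form
\[
  \left(\mathbf{B} + \left(\sigma - \frac{1}{2}\right)\tau\mathbf{A} + \sigma^2\tau^2\mathbf{A}_1\mathbf{B}^{-1}\mathbf{A}_2\right)
  \frac{\mathbf{y}^{n+1} - \mathbf{y}^{n}}{\tau} +
  \mathbf{A}\frac{\mathbf{y}^{n+1} + \mathbf{y}^{n}}{2} = \boldsymbol{\varphi}^{n}.
\]
Denote the operator in the first term by $\mathbf{G} := \mathbf{B} + (\sigma - \tfrac12)\tau\mathbf{A} + \sigma^2\tau^2\mathbf{A}_1\mathbf{B}^{-1}\mathbf{A}_2$.

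The key observation, which replaces the role of $\mathbf{B}=\mathbf{B}^*>0$ in Theorem~\ref{t-1}, is that $\mathbf{G}$ is self-adjoint and positive definite whenever $\sigma\ge 1/2$. Self-adjointness follows from $\mathbf{B}=\mathbf{B}^*$, $\mathbf{A}=\mathbf{A}^*$ and $(\mathbf{A}_1\mathbf{B}^{-1}\mathbf{A}_2)^* = \mathbf{A}_2^*\mathbf{B}^{-1}\mathbf{A}_1^* = \mathbf{A}_1\mathbf{B}^{-1}\mathbf{A}_2$ by \eqref{12}. Positive definiteness follows because $\mathbf{B}>0$, $(\sigma-\tfrac12)\tau\mathbf{A}\ge 0$ under $\sigma\ge 1/2$, and $\mathbf{A}_1\mathbf{B}^{-1}\mathbf{A}_2 = \mathbf{A}_1\mathbf{B}^{-1}\mathbf{A}_1^* \ge 0$; in fact $\mathbf{G}\ge\mathbf{B}\ge\delta_B\mathbf{E}>0$, so $\mathbf{G}$ is boundedly invertible and $\mathbf{G}^{-1}$ is well defined. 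This is exactly the content of the displayed remark preceding the theorem in the excerpt.

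Next I would scalarly multiply the symmetric form of the scheme in $\mathbf{H}$ by $2(\mathbf{y}^{n+1}-\mathbf{y}^n)$. The middle term produces the telescoping difference $(\mathbf{A}\mathbf{y}^{n+1},\mathbf{y}^{n+1}) - (\mathbf{A}\mathbf{y}^n,\mathbf{y}^n) = \|\mathbf{y}^{n+1}\|_\mathbf{A}^2 - \|\mathbf{y}^n\|_\mathbf{A}^2$, and the first term produces $2\tau\left(\mathbf{G}\frac{\mathbf{y}^{n+1}-\mathbf{y}^n}{\tau},\frac{\mathbf{y}^{n+1}-\mathbf{y}^n}{\tau}\right)\ge 0$. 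On the right-hand side I use the Cauchy--Schwarz/Young inequality in the form
\[
  \left(\boldsymbol{\varphi}^n,\frac{\mathbf{y}^{n+1}-\mathbf{y}^n}{\tau}\right) \le
  \left(\mathbf{G}\frac{\mathbf{y}^{n+1}-\mathbf{y}^n}{\tau},\frac{\mathbf{y}^{n+1}-\mathbf{y}^n}{\tau}\right) +
  \frac{1}{4}\left(\mathbf{G}^{-1}\boldsymbol{\varphi}^n,\boldsymbol{\varphi}^n\right),
\]
which is valid precisely because $\mathbf{G}=\mathbf{G}^*>0$. The first term on the right cancels the nonnegative first term on the left, leaving $\|\mathbf{y}^{n+1}\|_\mathbf{A}^2 - \|\mathbf{y}^n\|_\mathbf{A}^2 \le \frac{\tau}{2}(\mathbf{G}^{-1}\boldsymbol{\varphi}^n,\boldsymbol{\varphi}^n) = \frac{\tau}{2}\|\boldsymbol{\varphi}^n\|_{\mathbf{G}^{-1}}^2$, which is the claimed estimate since $\mathbf{G} = \mathbf{B} + (\sigma-\tfrac12)\tau\mathbf{A} + \sigma^2\tau^2\mathbf{A}_1\mathbf{B}^{-1}\mathbf{A}_2$. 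Absolute (unconditional) stability in $\mathbf{H}_\mathbf{A}$ is immediate from this level-wise inequality, with no restriction on $\tau$.

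The only genuinely new point compared with Theorem~\ref{t-1} is verifying that the cross term $\sigma^2\tau^2\mathbf{A}_1\mathbf{B}^{-1}\mathbf{A}_2$ is self-adjoint and nonnegative; everything else is a verbatim repetition of the earlier argument. I expect no real obstacle — the main thing to be careful about is correctly expanding the factorized form \eqref{14} to obtain $\widetilde{\mathbf{B}} = \mathbf{B} + \sigma\tau\mathbf{A} + \sigma^2\tau^2\mathbf{A}_1\mathbf{B}^{-1}\mathbf{A}_2$ (using $\mathbf{A}_1+\mathbf{A}_2=\mathbf{A}$), and then keeping track of the shift by $\tfrac12\tau\mathbf{A}$ when passing to the symmetric form, so that the operator appearing in the norm on the right-hand side is $\mathbf{B} + (\sigma-\tfrac12)\tau\mathbf{A} + \sigma^2\tau^2\mathbf{A}_1\mathbf{B}^{-1}\mathbf{A}_2$ rather than $\widetilde{\mathbf{B}}$ itself.
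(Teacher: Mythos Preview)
Your proposal is correct and is exactly the approach the paper intends: the paper itself gives no detailed proof of Theorem~\ref{t-2}, only the remark ``Similar to Theorem~\ref{t-1}, we prove the following statement'' together with the preceding observation that $\widetilde{\mathbf{B}} = \mathbf{B} + \sigma\tau\mathbf{A} + \sigma^2\tau^2\mathbf{A}_1\mathbf{B}^{-1}\mathbf{A}_2$ is self-adjoint with $\widetilde{\mathbf{B}} \ge \mathbf{B} + \sigma\tau\mathbf{A}$. Your write-up faithfully unpacks that hint, correctly obtaining $\mathbf{G} = \widetilde{\mathbf{B}} - \tfrac{\tau}{2}\mathbf{A}$ as the operator in the symmetric form and verifying $\mathbf{G} = \mathbf{G}^* > 0$ for $\sigma \ge 1/2$.
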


The computational implementation of scheme (\ref{12})-(\ref{14}) 
for solving problem (\ref{1}), (\ref{2}), (\ref{11}) 
can be conducted using the sequence of simpler problems:
\[
  \left (B_{\alpha} +
  \sigma \frac{\tau}{2}  A_{\alpha \alpha } \right )y_\alpha ^{n+1/2} = \check{\psi}_\alpha^n,
\] 
\[
  \left (B_{\alpha} +
  \sigma \frac{\tau}{2}  A_{\alpha \alpha } \right )y_\alpha ^{n+1} = \hat{\psi}_\alpha^n,
  \quad \alpha = 1, 2, \dots, p.   
\]
As the scheme with weights (\ref{7}), factorized scheme (\ref{12})-(\ref{14}) has the convergence of second order at  
$\sigma = 1/2$ and of first order at any other weights. 

\section{Generalizations}

If problem (\ref{3}), (\ref{4}) has a non-diagonal operator 
$\mathbf{B}$ ($B_{\alpha \beta} \neq \delta_{\alpha \beta}B_{\alpha}$), then 
additive operator-difference schemes can be constructed using the triangular splitting for both
the operator $\mathbf{A}$ and the operator $\mathbf{B}$. 
Similarly (\ref{12}), we assume that
\begin{equation}\label{15}
  \mathbf{B} = \mathbf{B}_1 + \mathbf{B}_2,
  \quad  \mathbf{B}_1^* = \mathbf{B}_2 .
\end{equation}
Taking into account (\ref{12}), (\ref{15}), we write the scheme with weights (\ref{10}) as follows  
\begin{equation}\label{16}
  \mathbf{C} \frac{\mathbf{y}^{n+1} - \mathbf{y}^{n}}{\tau } +
  \mathbf{A} \mathbf{y}^{n} =
  \boldsymbol{\varphi}^{n},
\end{equation} 
where
\begin{equation}\label{17}
  \mathbf{C} = \mathbf{C}_1 + \mathbf{C}_2,
  \quad \mathbf{C}_1 = \mathbf{B}_1 + \sigma\tau \mathbf{A}_1,
  \quad  \mathbf{C}_2 = \mathbf{B}_2 + \sigma\tau \mathbf{A}_2 .
\end{equation}
In view of (\ref{5}) we have 
\[
  \mathbf{C}_1^* =  \mathbf{C}_2,
  \quad  \mathbf{C}_\alpha \geq \frac{1}{2} (\delta_B +  \sigma\tau \delta_A) \mathbf{E} . 
\] 

The operator $\mathbf{C}_1 + \mathbf{C}_2$  can be represented like this 
\[
  \mathbf{C}_1 + \mathbf{C}_2 = 
  \frac{1}{2\varepsilon} (\mathbf{C}_1 + \varepsilon \mathbf{E}) (\mathbf{C}_2 + \varepsilon \mathbf{E})
  - \frac{1}{2\varepsilon}(\mathbf{C}_1 - \varepsilon \mathbf{E}) (\mathbf{C}_2 - \varepsilon \mathbf{E})
\] 
at any  $\varepsilon > 0$. The value of $\varepsilon$ will be defined a little bit later. 
Instead of two-level scheme (\ref{16}) we employ the three-level scheme 
\begin{equation}\label{18}
  \frac{1}{2\varepsilon} (\mathbf{C}_1 + \varepsilon \mathbf{E}) (\mathbf{C}_2 + \varepsilon \mathbf{E})
  \frac{\mathbf{y}^{n+1} - \mathbf{y}^{n}}{\tau } -
\end{equation} 
\[
  \frac{1}{2\varepsilon} (\mathbf{C}_1 - \varepsilon \mathbf{E}) (\mathbf{C}_2 - \varepsilon \mathbf{E})
  \frac{\mathbf{y}^{n} - \mathbf{y}^{n-1}}{\tau } +
  \mathbf{A} \mathbf{y}^{n} =
  \boldsymbol{\varphi}^{n} .
\]
The primary potential advantage of this scheme in compare with scheme (\ref{16}) is that 
its implementation is based on the inversion of the factorized operator 
$(\mathbf{C}_1 + \varepsilon \mathbf{E}) (\mathbf{C}_2 + \varepsilon \mathbf{E})$ at the new time level.

Taking into account that 
\[
  \frac{\mathbf{y}^{n+1} - \mathbf{y}^{n}}{\tau } = 
  \frac{\mathbf{y}^{n+1} - \mathbf{y}^{n-1}}{2\tau } +
  \frac{\tau}{2} \frac{\mathbf{y}^{n+1} - 2 \mathbf{y}^{n} + \mathbf{y}^{n-1}}{\tau^2},
\] 
\[
  \frac{\mathbf{y}^{n} - \mathbf{y}^{n-1}}{\tau } = 
  \frac{\mathbf{y}^{n+1} - \mathbf{y}^{n-1}}{2\tau } -
  \frac{\tau}{2} \frac{\mathbf{y}^{n+1} - 2 \mathbf{y}^{n} + \mathbf{y}^{n-1}}{\tau^2},
\]
we can write scheme (\ref{18}) as follows 
\begin{equation}\label{19}
  \mathbf{C} \frac{\mathbf{y}^{n+1} - \mathbf{y}^{n-1}}{2\tau } +
  \mathbf{D} \frac{\mathbf{y}^{n+1} - 2 \mathbf{y}^{n} + \mathbf{y}^{n-1}}{\tau^2} +
  \mathbf{A} \mathbf{y}^{n} =
  \boldsymbol{\varphi}^{n} ,
\end{equation} 
where 
\[
 \mathbf{D} = \frac{\tau}{2 \varepsilon} (\mathbf{C}_1 \mathbf{C}_2 + \varepsilon^2  \mathbf{E}) .
\] 
In view of $\mathbf{C} = \mathbf{B} + \sigma \tau \mathbf{A}$
we verify directly that operator-difference scheme (\ref{19})  
approximates equation (\ref{3}) with the first-order accuracy with respect to $\tau$ at all $\varepsilon = \mathcal{O}(1)$.
We now formulate the sufficient conditions of stability for this scheme. 
A comprehensive study on the stability of three-level schemes with self-adjoint operators
was done in \cite{Samarskii,SamGul,SamMatVab}. We do not use here the general results on the stability 
of operator-difference schemes from the above works. Similar to \cite{VabBadd}, we obtain immediately  
the simplest estimates of stability with respect to the initial data and right-hand side. 

Taking into account 
\[
  \mathbf{y}^{n} = 
  \frac{1}{4} (\mathbf{y}^{n+1} + 2 \mathbf{y}^{n} + \mathbf{y}^{n-1}) -
  \frac{1}{4} (\mathbf{y}^{n+1} - 2 \mathbf{y}^{n} + \mathbf{y}^{n-1}) 
\] 
we write (\ref{19}) as 
\begin{equation}\label{20}
  \mathbf{C} \frac{\mathbf{y}^{n+1} - \mathbf{y}^{n-1}}{2\tau } +
  \left (\mathbf{D} - \frac{\tau^2}{4} \mathbf{A} \right ) 
  \frac{\mathbf{y}^{n+1} - 2 \mathbf{y}^{n} + \mathbf{y}^{n-1}}{\tau^2} +
\end{equation} 
\[
  \mathbf{A} \frac{\mathbf{y}^{n+1} - 2 \mathbf{y}^{n} + \mathbf{y}^{n-1}}{4} =
  \boldsymbol{\varphi}^{n} .
\]
Introducing 
\[
  \mathbf{v}^{n} = \frac{1}{2} (\mathbf{y}^{n} + \mathbf{y}^{n-1}),
  \quad \mathbf{w}^{n} = \frac{\mathbf{y}^{n} - \mathbf{y}^{n-1}}{\tau} 
\]
we can rewrite (\ref{20}) in the form 
\begin{equation}\label{21}
  \mathbf{C} \frac{\mathbf{w}^{n+1} + \mathbf{w}^{n}}{2} 
  + \mathbf{R}
  \frac{\mathbf{w}^{n+1} - \mathbf{w}^{n}}{\tau } +
  \frac{1 }{2} \mathbf{A}
  ( \mathbf{v}^{n+1} + \mathbf{y}^{n})  =
  \boldsymbol{\varphi}^{n} ,
\end{equation}
where 
\[
  \mathbf{R} = \mathbf{D} - \frac{\tau^2}{4} \mathbf{A} .
\] 
Scalarly multiplying  both sides of (\ref{21}) by  
\[
  2 (\mathbf{v}^{n+1} - \mathbf{v}^{n}) =
  \tau (\mathbf{w}^{n+1} + \mathbf{w}^{n}) ,
\]
we get the equality
\begin{equation}\label{22}
  \frac{\tau}{2} 
  ( \mathbf{C} (\mathbf{w}^{n+1} + \mathbf{w}^{n}),
    \mathbf{w}^{n+1} + \mathbf{w}^{n}) +
  ( \mathbf{R} (\mathbf{w}^{n+1} - \mathbf{w}^{n}),
    \mathbf{w}^{n+1} + \mathbf{w}^{n}) +
\end{equation}
\[
  ( \mathbf{A} (\mathbf{v}^{n+1} + \mathbf{v}^{n}),
    \mathbf{v}^{n+1} - \mathbf{v}^{n}) =
	\tau (\boldsymbol{\varphi}^{n}, \mathbf{w}^{n+1} + \mathbf{w}^{n} ) .
\]
For the right-hand side we use the estimate
\[
  (\boldsymbol{\varphi}^{n}, \mathbf{w}^{n+1} + \mathbf{w}^{n} ) \leq 
  \frac{1 }{2} 
  ( \mathbf{C} (\mathbf{w}^{n+1} + \mathbf{w}^{n}) +
  \frac{1}{2} 
  (\mathbf{C}^{-1} \boldsymbol{\varphi}^n, \boldsymbol{\varphi}^n) .
\]
This yields from (\ref{22}) the inequality 
\begin{equation}\label{23}
  \mathcal{E}_{n+1} \leq 
  \mathcal{E}_{n} +
  \frac{\tau}{2} 
  (\mathbf{C}^{-1} \boldsymbol{\varphi}^n, \boldsymbol{\varphi}^n) ,
\end{equation}
where we use the notation 
\[
  \mathcal{E}_{n} = 
  ( \mathbf{A} \mathbf{v}^{n}, \mathbf{v}^{n})
  +   ( \mathbf{R} \mathbf{w}^{n}, \mathbf{w}^{n}) .
\]
Inequality (\ref{23}) will be the desired a priori estimate, 
if we show that $\mathcal{E}_{n}$ defines the squared norm of the difference solution.
Due to the positivity of $\mathbf{A}$, it is sufficient to require the 
non-negativity  of the operator  $\mathbf{R}$.

With the above-mentioned notations we have 
\[
  \mathbf{R} =
  \frac{\tau}{4\varepsilon} (\mathbf{C}_1 + \varepsilon \mathbf{E}) (\mathbf{C}_2 + \varepsilon \mathbf{E}) + 
  \frac{\tau}{4\varepsilon} (\mathbf{C}_1 - \varepsilon \mathbf{E}) (\mathbf{C}_2 - \varepsilon \mathbf{E}) -
  \frac{\tau^2}{4} \mathbf{A} \geq 
\] 
\[
  \frac{\tau}{4\varepsilon} (\mathbf{C}_1 + \varepsilon \mathbf{E}) (\mathbf{C}_2 + \varepsilon \mathbf{E}) -
  \frac{\tau^2}{4} \mathbf{A} >  \frac{\tau^2}{4} (\sigma - 1) \mathbf{A} .
\] 
Thus, $\mathbf{R} > 0$ at $\sigma \geq 1$.
The result of our considerations is the following statement.

\begin{theorem}\label{t-3}
If $\sigma \geq 1$, then operator-difference scheme (\ref{12}), (\ref{15}), (\ref{18})  is
absolutely stable, and for the difference solution a priori estimate (\ref{23}) holds with
\[
  \mathcal{E}_{n} = 
  \left \| \frac{\mathbf{y}^{n} + \mathbf{y}^{n-1}}{2} \right \|^2_{\mathbf{A}} +
  \left \| \frac{\mathbf{y}^{n} - \mathbf{y}^{n-1}}{\tau} \right \|^2_{\mathbf{R}} .
\]
\end{theorem}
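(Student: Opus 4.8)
The plan is to exploit the chain of equivalent reformulations of scheme (\ref{18}) already assembled above, turn the last of them into an energy identity, and then reduce the whole assertion to the single operator inequality $\mathbf{R}\ge 0$. First I would record that (\ref{18}) is, purely algebraically, the same as (\ref{19}): this uses the two displayed decompositions of $(\mathbf{y}^{n+1}-\mathbf{y}^{n})/\tau$ and $(\mathbf{y}^{n}-\mathbf{y}^{n-1})/\tau$ together with $\mathbf{C}=\mathbf{C}_1+\mathbf{C}_2=\mathbf{B}+\sigma\tau\mathbf{A}$ and the definition $\mathbf{D}=\tfrac{\tau}{2\varepsilon}(\mathbf{C}_1\mathbf{C}_2+\varepsilon^{2}\mathbf{E})$. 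Splitting the middle term $\mathbf{A}\mathbf{y}^{n}$ as indicated turns (\ref{19}) into (\ref{20}), and introducing $\mathbf{v}^{n}=\tfrac12(\mathbf{y}^{n}+\mathbf{y}^{n-1})$, $\mathbf{w}^{n}=(\mathbf{y}^{n}-\mathbf{y}^{n-1})/\tau$ rewrites (\ref{20}) as (\ref{21}) with $\mathbf{R}=\mathbf{D}-\tfrac{\tau^{2}}{4}\mathbf{A}$. No positivity is needed here; it is bookkeeping with the definitions (\ref{15}) and (\ref{17}).

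Next I would take the $\mathbf{H}$-inner product of (\ref{21}) with $2(\mathbf{v}^{n+1}-\mathbf{v}^{n})=\tau(\mathbf{w}^{n+1}+\mathbf{w}^{n})$. Since $\mathbf{C}=\mathbf{C}^{*}>0$ (from $\mathbf{C}_1^{*}=\mathbf{C}_2$ and (\ref{5})), the first term contributes $\tfrac{\tau}{2}\|\mathbf{w}^{n+1}+\mathbf{w}^{n}\|_{\mathbf{C}}^{2}$; since $\mathbf{R}=\mathbf{R}^{*}$ --- it is an affine function of the self-adjoint operators $\mathbf{C}_1\mathbf{C}_2=\mathbf{C}_2^{*}\mathbf{C}_2$ and $\mathbf{A}$ --- the $\mathbf{R}$-term telescopes to $\|\mathbf{w}^{n+1}\|_{\mathbf{R}}^{2}-\|\mathbf{w}^{n}\|_{\mathbf{R}}^{2}$; and since $\mathbf{A}=\mathbf{A}^{*}$ the $\mathbf{A}$-term telescopes to $\|\mathbf{v}^{n+1}\|_{\mathbf{A}}^{2}-\|\mathbf{v}^{n}\|_{\mathbf{A}}^{2}$. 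This is identity (\ref{22}). Estimating $\tau(\boldsymbol{\varphi}^{n},\mathbf{w}^{n+1}+\mathbf{w}^{n})$ by the Cauchy--Schwarz and Young inequalities in the $\mathbf{C}$-metric absorbs the $\mathbf{C}$-term and leaves exactly (\ref{23}) with $\mathcal{E}_{n}=\|\mathbf{v}^{n}\|_{\mathbf{A}}^{2}+\|\mathbf{w}^{n}\|_{\mathbf{R}}^{2}$, which is the $\mathcal{E}_{n}$ of the statement; summing (\ref{23}) over the time levels then gives unconditional stability --- once we know that $\mathcal{E}_{n}$ controls a genuine norm of the difference solution.

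That last point is the one genuinely delicate step, and the place where I expect the main obstacle: one must show $\mathbf{R}\ge 0$ for $\sigma\ge 1$, so that $\mathcal{E}_{n}\ge 0$. For this I would use the identity $\mathbf{D}=\tfrac{\tau}{4\varepsilon}(\mathbf{C}_1+\varepsilon\mathbf{E})(\mathbf{C}_2+\varepsilon\mathbf{E})+\tfrac{\tau}{4\varepsilon}(\mathbf{C}_1-\varepsilon\mathbf{E})(\mathbf{C}_2-\varepsilon\mathbf{E})$. Because $\mathbf{C}_1^{*}=\mathbf{C}_2$, each factor $(\mathbf{C}_1\pm\varepsilon\mathbf{E})(\mathbf{C}_2\pm\varepsilon\mathbf{E})$ has the form $\mathbf{G}^{*}\mathbf{G}\ge 0$; discarding the minus one and expanding $(\mathbf{C}_1+\varepsilon\mathbf{E})(\mathbf{C}_2+\varepsilon\mathbf{E})=\mathbf{C}_1\mathbf{C}_2+\varepsilon\mathbf{C}+\varepsilon^{2}\mathbf{E}$, then dropping the nonnegative $\mathbf{C}_1\mathbf{C}_2$ and the positive $\varepsilon^{2}\mathbf{E}$ and using $\mathbf{C}\ge\sigma\tau\mathbf{A}$, I obtain $\mathbf{R}>\tfrac{\tau}{4}\mathbf{C}-\tfrac{\tau^{2}}{4}\mathbf{A}\ge\tfrac{(\sigma-1)\tau^{2}}{4}\mathbf{A}\ge 0$ for $\sigma\ge 1$. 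Crucially this bound is independent of the fixed $\varepsilon>0$, so $\varepsilon$ plays no role in stability (it matters only for the implementation and for the $\mathcal{O}(1)$-choice that keeps first-order accuracy in $\tau$). With $\mathbf{R}\ge 0$ and $\mathbf{A}>0$, $\mathcal{E}_{n}$ is the announced squared norm and (\ref{23}) is the desired a priori estimate, which completes the proof.
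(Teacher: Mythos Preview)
Your proposal is correct and follows the paper's proof essentially step for step: the same chain of reformulations (\ref{18})\,$\to$\,(\ref{19})\,$\to$\,(\ref{20})\,$\to$\,(\ref{21}), the same test function $2(\mathbf{v}^{n+1}-\mathbf{v}^{n})=\tau(\mathbf{w}^{n+1}+\mathbf{w}^{n})$ yielding (\ref{22}), the same Young-type bound in the $\mathbf{C}$-metric giving (\ref{23}), and the same positivity argument for $\mathbf{R}$ via the identity $\mathbf{D}=\tfrac{\tau}{4\varepsilon}\sum_{\pm}(\mathbf{C}_1\pm\varepsilon\mathbf{E})(\mathbf{C}_2\pm\varepsilon\mathbf{E})$ followed by dropping nonnegative terms and using $\mathbf{C}\ge\sigma\tau\mathbf{A}$. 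Your write-up is in places more explicit than the paper's (e.g.\ why $\mathbf{R}=\mathbf{R}^{*}$ and why each $(\mathbf{C}_1\pm\varepsilon\mathbf{E})(\mathbf{C}_2\pm\varepsilon\mathbf{E})$ is of the form $\mathbf{G}^{*}\mathbf{G}$), but there is no substantive difference.
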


The proven estimate (\ref{23}) ensures the stability of operator-difference scheme
(\ref{12}), (\ref{15}), (\ref{18}) with respect to the initial data and right-hand side.
It can be treated as a more complex analog of estimate (\ref{9}) 
and it agrees with  estimate (\ref{6}) for the solution of the problem
(\ref{3}), (\ref{4}).

The computational implementation of scheme (\ref{18}) involves the solution of the grid problem
\begin{equation}\label{24}
  (\mathbf{C}_1 + \varepsilon \mathbf{E}) (\mathbf{C}_2 + \varepsilon \mathbf{E}) \mathbf{y}^{n+1} 
  = \boldsymbol{\psi}^n
\end{equation} 
at the new time level.
Introducing the auxiliary unknown $\mathbf{y}^{n+1/2}$, we have for (\ref{24}) the following representation
\[
 (\mathbf{C}_1 + \varepsilon \mathbf{E}) \mathbf{y}^{n+1/2} = \boldsymbol{\psi}^n,
\] 
\[
 (\mathbf{C}_2 + \varepsilon \mathbf{E}) \mathbf{y}^{n+1} = \mathbf{y}^{n+1/2} .
\] 
Taking into account the triangular structure of the operators $(\mathbf{C}_1$ and $(\mathbf{C}_2$, we
sequentially solve the problem
\[
  (B_{\alpha\alpha} + \sigma \tau  A_{\alpha \alpha } + 2\varepsilon E_\alpha  )y_\alpha ^{n+1/2} = \check{\psi}_\alpha^n,
\] 
\[
  (B_{\alpha\alpha} + \sigma \tau  A_{\alpha \alpha } + 2\varepsilon E_\alpha  )y_\alpha ^{n+1} = \hat{\psi}_\alpha^n,
  \quad \alpha = 1, 2, \dots, p,   
\]
where $E_\alpha$  is the identity operator in $H_\alpha$.

\bibliographystyle{amsplain}
\bibliography{BadditSys}

\end{document}